\let\theoremstyle\relax
\newtheorem{theorem}{Theorem}[section]
\newtheorem{lemma}[theorem]{Lemma}
\newtheorem{proposition}{Proposition}
\theoremstyle{definition}
\DeclareMathOperator{\T}{\mathcal{T}}
\DeclareMathOperator{\R}{\mathbb{R}}
\newcommand{\I}{{\mathcal I}}
\newcommand{\lra}{\longrightarrow}
\newcommand{\ra}{\rightarrow}
\newcommand{\cI}{{\mathcal I}}
\newcommand{\cA}{{\mathcal A}}
\newcommand{\cC}{{\mathcal C}}
\newcommand{\cU}{{\mathcal U}}
\newcommand{\cT}{{\mathcal T}}
\renewcommand{\phi}{\varphi}
\renewcommand{\a}{{\alpha}}
\renewcommand{\l}{\lambda}
\newcommand{\BUC}{\operatorname{BUC}}
\begin{document}
\begin{frontmatter}

\title{A hybrid control framework for an optimal visiting problem} 

\thanks[footnoteinfo]{A. Festa was supported by MIUR grant ``Dipartimenti Eccellenza 2018-2022" CUP: E11G18000350001, DISMA, Politecnico di Torino}

\author[Bagagiolo]{Fabio Bagagiolo} 
\author[Festa]{Adriano Festa}
\author[Marzufero]{Luciano Marzufero}

\address[Bagagiolo]{Dipartimento di Matematica,
Universit\`a di Trento, 
Via Sommarive, 14, 38123 Povo (TN) Italy, (e-mail: fabio.bagagiolo@unitn.it).}
\address[Festa]{Dipartimento di Scienze Matematiche ``G. L. Lagrange", 
Politecnico di Torino, 
Corso Duca degli Abruzzi, 24, 10129 Torino Italy, (e-mail: adriano.festa@polito.it)}
\address[Marzufero]{Dipartimento di Matematica,
Universit\`a di Trento, 
Via Sommarive, 14, 38123 Povo (TN) Italy, (e-mail: luciano.marzufero@unitn.it).}

\begin{abstract}                
The optimal visiting problem is the optimization of a trajectory that has to touch or pass as close as possible to a collection of target points. The problem does not verify the dynamic programming principle, and it needs a specific formulation to keep track of the visited target points. In this paper, we introduce a hybrid approach by adding a discontinuous part of the trajectory switching between a group of discrete states related to the targets. Then, we show the well-posedness of the related Hamilton-Jacobi problem, by reformulating the optimal visiting as a collection of time-dependent optimal stopping problems. 
\end{abstract}

\begin{keyword}
Optimal control of hybrid systems, Output feedback control, Generalized solutions of Hamilton-Jacobi equations.
\end{keyword}

\end{frontmatter}

\section{Introduction}

In this paper, we study the problem of optimizing a trajectory to pass as close as possible to a collection of target sets at a certain time. We call this problem \emph{optimal visiting}, and it is related to the  ``Traveling Salesman Problem'', including the high complexity of computation for a large number of targets.  Furthermore, the dynamical nature of the problem poses some additional difficulties, in particular for the study of the related Hamilton-Jacobi (HJ) equations. As observed in \cite{bagben}, to recover the dynamic programming property and hence HJ, it requires a special framework able to include a ``memory" of the targets already visited. This can be done using various tools. In  that paper, a sort of continuous memory was introduced, and the problem was studied in the framework of dynamic programming and HJ equations. A switching/discontinuous/hybrid memory was instead used for a one-dimensional optimal visiting problem on a network in \cite{bagfagmagpes}. 

In the present paper, for a multi-dimensional problem, we propose a hybrid control-based construction, similarly as in \cite{bagfagmagpes}, with the difference that one can get rid of a target at any moment just paying a suitable cost. This will lead to an optimal-stopping formulation of the problem. Here we focus on the theoretical results that are sufficient to guarantee the well-position of the problem and the characterization of the value function as the unique solution of a suitable HJ problem. However, an application of the current framework is discussed in \cite{BFMProc2}, where some numerical results are also reported. Moreover a generalization of the idea to a mean-field games related model - i.e., where an infinity of self-similar agents optimize their trajectories - is developed in \cite{BFMNoDea}. For that possible generalization, we consider time-dependent optimal stopping problems, that is, with running cost and stopping cost explicitly dependent on time. See also the comments on \S\ref{conclusion}.

We use the theory of viscosity solutions (see, e.g., \cite{BardiCapuz@incollection, Festa2017127}). Moreover, the hybrid framework is strictly related to hybrid control (see \cite{branicky1998unified} and also \cite{bensoussan1997hybrid, dharmatti2005hybrid}).

\section{The optimal visiting problem}

Given $N$  disjoint compact target sets $\{\T_j\}_{j=1,\ldots,N}\subset \R^d$, we represent the state of the system by the pair $(x, p)\in\R^d\times\I$, where $p=(p^1, p^2,\ldots, p^N)\in\cI=\{0,1\}^N$. Therefore, $x$ is the continuous state variable (i.e., the position in $\R^d$) and $p$ is the switching discrete state variable. The controlled dynamical system is
\begin{equation}
\label{eq_stato}
\begin{cases}
 y'(s)=f(y(s),\alpha(s), q(s)),&\text{a.e.}\ s\in ]t,T]\\
 y(t)= x,\ q(t)=p
\end{cases},
\end{equation}
where $(x, p)\in\R^d\times\cI$ is the initial state, $t\in[0, T]$ the initial instant, $T>0$ the fixed finite horizon. The measurable control is (for $A\subset\mathbb{R}^m$ compact)
$$
\alpha\in{\cA}:=\left\{\alpha:[0,+\infty[\lra A\ \mbox{measurable}\right\}
$$
and the dynamics $q(\cdot)$ of the switching variable (which represents here the memory) is subject to
$$
\exists\tau\in[t, s], \ y(\tau)\in{\T_j}\Rightarrow\ q^j(s)=1;\ \ q^j(s)=p^j\ \text{otherwise}.
$$
Formally $q^j(s)=0$ means that the target ${\T_j}$ has not been visited yet in $[t, s]$ and vice versa for $q^j(s)=1$. The dynamics $f :\R^d\times A \times \I\longrightarrow \R^d$ is continuous,  bounded and Lipschitz continuous w.r.t. $x\in\R^d$ uniformly w.r.t. $(a,p)\in A\times\cI$, i.e., there exists $L>0$ such that
$$
\|f(x,a,p)-f(y,a,p)\|\le L\|x-y\|
$$
for all $(x,y)\in\R^d\times\R^d$ and $(a,p)\in A\times\cI$. By our hypotheses, for every initial state $(x,t,p)$ and control $\alpha$, the existence of a unique solution of \eqref{eq_stato} is guaranteed. Note that the number of switches of the variable $q$ is necessarily finite, hence $q$ is piecewise constant and the solution $y^\alpha_{(x,t,p)}(s)$ (or simply $y(s)$) of \eqref{eq_stato} is in the sense of absolutely continuous function. 
\par
The optimal visiting problem is then to reach, if possible, the discrete state
$
\bar p=(1,1,\dots,1)
$
(i.e. to visit all the targets) at a time $t\leq\bar t\leq T$, minimizing the cost
$$
\int_t^{\bar t}e^{-\l(s-t)}\ell(y(s), \a(s), q(s), s)ds,
$$
for a given running cost $\ell$ and a discount factor $\l>0$. 

\subsection{A hybrid-control relaxation: optimal switching}
\label{firstrel}
The optimal control problem described above requires to ``exactly touch" all the targets in an order which is not a priori given but, due to the optimization, is part of the solution itself. This makes the evolution of the discrete variable $q$ rather complicated, in particular in view of the corresponding Hamilton-Jacobi equation. We then relax the problem replacing ``exactly touch" with ``to pass as close as possible" to each target. We then assume that we can definitely get rid of some targets at any time and take into account only the remaining ones.  In doing that, we pay an additional cost depending, for instance, on the actual distance from the discarded targets. In this way, the evolution $q(\cdot)$ is no more given by system \eqref{eq_stato}, but instead, it becomes a control at our disposal. Clearly, there are some constraints: for example, for $N=4$, if $p=(1, 0 ,0, 0)$, $p'=(1, 1, 0, 0)$, $p''=(0, 1, 1, 0)$ and $p'''=(1, 1, 1, 0)$, then from $p$ we can not switch to $p''$ otherwise we lose the information about the already visited/discarded target $\cT_1$.  We can, instead, switch to $p'''$ directly.

Hence, for any $p$, we denote by $\cI_p$ the set of all possible new variables in $\cI$ after a switch from $p$:
\begin{multline*}
\cI_p=\{\tilde p\in\cI:p^i=1 \Rightarrow \tilde p^i=1\\
 \text{and}\ \exists l=1,\ldots,N:p^l=0, \; \tilde p^l=1\}.
\end{multline*}
We observe that in particular $\cI_{\bar p}=\emptyset$, where $\bar p=(1,\ldots,1)$.
\par
For a given $p$, the number of the admissible subsequent switches is at most $N-\sum_{i}p^i\leq N$. Given the state $(x, p)$ at the time $t$ with $p\neq\bar p$, the controller chooses the measurable control $\a\in\cA$, and the discrete one $q:[0,+\infty[\lra\cI$ which contains: the number $1\leq m\leq N-\sum_{i}p^i$ of switches to be performed in order to reach $\bar p$, the switching instants $t\leq t_1<t_2<\ldots<t_m\leq T$ and the switching destinations $p_1,\ldots,p_{m-1}$, $p_m=\bar p$. The destinations must satisfy 
$
p_1\in\cI_{p}, \, p_{i+1}\in\cI_{p_i},\, i=1,\ldots,m-1.
$
Then, to resume, the control at disposal is 
$$
(\a,q)=(\a, m, t_1,\ldots,t_m, p_1,\ldots,p_{m-1})=:u
$$
and note that, for any $(x, p, t)$ as above, such a string belongs to a set depending on $p$ and $t$, denoted by $\cU_{(p, t)}$.
The cost to be minimized is
\begin{multline*}
J(x, t, p, u)=\sum_{j=1}^m\Bigg(\int_{t_{j-1}}^{t_j}e^{-\l(s-t)}\ell(y(s), \a(s), p_{j-1}, s)ds\\
+e^{-\l(t_j-t)}C(y(t_j), p_{j-1}, p_j)\Bigg),
\end{multline*}
with $\l\ge0$, $p_0=p$, $t_0=t$ and $y(s)$ is the solution of \eqref{eq_stato} where $q(s)=p_{j-1}$ if $s\in[t_{j-1}, t_j]$. 
\par
We assume $\ell:\R^d\times A\times\cI\times[0, T]\lra[0, +\infty[$ bounded, continuous and uniformly continuous w.r.t. $x$ uniformly w.r.t. $a\in A$, $p\in\cI$ and $t\in[0, T]$.
Moreover $C:\R^d\times\cI\times\cI\lra[0, +\infty[$ is uniformly continuous w.r.t. $x\in\R^d$, uniformly w.r.t. $p, p'\in\cI\times\cI_p$. Note that $C(x, p, p')$ represents the switching cost from $p$ to $p'$ when the state position is $x\in\R^d$. For example, it may depend on the distance from the discarded targets, that is $C(x, p, p')=\sum_j\chi_j(p, p')d(x,\cT_j)$, where
$$
\chi_j(p, p')=\begin{cases}0,&p^j=p'^j\\
1,&\text{otherwise}
\end{cases}.
$$
The value function of the problem is
\begin{equation}
\label{eq:V-switching}
V(x, t, p)=\inf_{u\in \cU_{(p, t)}}J(x, t, p, u).
\end{equation}

\subsection{A family of optimal stopping problems}
\label{subsec:optimal-stopping}
Here, in order to better exploit the hierarchical feature of the model, we divide the optimal switching problem above into several optimal stopping subproblems, one per every switching variable $p$, suitably coupled by the stopping costs. For example, suppose $N=4$ and take $p$ such that $\sum_{i}p^i=N-1=3$ (i.e., from $p$ we can switch only to $\bar p$). Then,  for a $(x, t, p)$, the controller has only to choose $u=(\a\in\cA, \tau\in[t,T])$ and minimize the cost 
\begin{multline}
\label{primocosto}
J_p(x, t, \a, \tau)=\int_t^{\tau}e^{-\l(s-t)}\ell(y(s), \a(s), p, s)ds\\
+e^{-\l(\tau-t)}C(y(\tau), p, \bar p).
\end{multline}
Note that in this representation $p$ is fixed, that is does not change in the time interval $[t,\tau]$. Hence \eqref{primocosto} gives a time-dependent optimal stopping problem in the state space $\R^d$, whose value function is
$$
V_p(x, t)=\inf_{(\a, \tau)} J_p(x, t, \a, \tau).
$$
Now, take $p$ such that $\sum_{i}p^i=N-2=2$. Then consider the time-dependent optimal stopping problem in the state space $\R^d$ where, for a given $(x, t)$, the control is $u=(\a\in\cA, \tau\in[t,T], p'\in\cI_p)$ and the cost to be minimized is
\begin{multline}
\label{secondocosto}
J_p(x, t, \a, \tau, p')=\int_t^{\tau}e^{-\l(s-t)}\ell(y(s), \a(s), p, s)ds
\\
+e^{-\l(\tau-t)}\Big(C(y(\tau), p, p')+V_{p'}(y(\tau), \tau)\Big).
\end{multline}
Note that from $p'$ we can only switch to the final state $\bar p$, and hence $V_{p'}$ can be a priori evaluated as in the previous step. Since when $p=\bar p$, the game stops, we set $V_{\bar p}\equiv0$. Hence \eqref{primocosto} can be seen formulated as \eqref{secondocosto}. The value function is then
\begin{equation}
\label{funzionivaloritimep}
V_p(x, t)=\inf_{(\a, \tau, p')}J_p(x, t, \a, \tau, p').
\end{equation}
Proceeding backwardly in this way, we consider a suitable time-dependent optimal stopping problem in $\R^d$ for any $p\in\cI$, and we can at least formally compute the corresponding value functions $V_p$. 
\par
We will see in \S\ref{equivalenza} the equivalence between the optimal control problem formulated in \S\ref{firstrel} and the family of optimal stopping problems here formulated.

\subsection{Time-dependent optimal stopping problem: position and theoretical results}
\label{opttime}
Here we collect some theoretical results for a time-dependent optimal stopping problem with a fixed finite horizon $T>0$.  We suitably generalize to our finite horizon time-dependent model the results in \cite{BardiCapuz@incollection} for an optimal stopping problem with no time-dependence and infinite horizon feature. Hence we drop the variable $p$ in the dynamical system:
\begin{equation}
\label{system}
\begin{cases}
y'(s)=f(y(s), \a(s)),&s\in]t, T]\\
y(t)=x
\end{cases},
\end{equation}
where $x\in\R^d$, $t\in[0, T]$ and
$$
\a\in\cA:=\{\a:[0, +\infty[\lra A: \a \ \text{is measurable}\},
$$
$A\subset\R^m$ is compact, $f:\R^d\times A\lra\R^d$ is continuous, bounded and there exists $L>0$ such that
\begin{equation}
\label{hpf0}
\|f(x, a)-f(y, a)\|\leq L\|x-y\|,\quad\forall x,y\in\R^d, \ a\in A.
\end{equation}
\par
We recall the following basic estimate on the trajectory $y_{(x, t)}(\cdot; \a)$: for all $x\in\R^d$, $\a\in\cA$ and $s\in[t, T]$,
\begin{equation}
\label{firstest}
\|y_{(x, t)}(s; \a)-x\|\leq M(s-t),
\end{equation}
where $M:=\sup\{\|f(z, a)\|: (z, a)\in\R^d\times A\}$.
\par\smallskip\noindent
The cost to be minimized is
\begin{multline*}
J(x, t, \a, \tau)=\int_t^{\tau}e^{-\l(s-t)}\ell(y_{(x, t)}(s; \a), \a(s), s)ds \\
+e^{-\l(\tau-t)}\psi(y_{(x, t)}(\tau; \a), \tau),
\end{multline*}
where $\tau\leq T$ is the stopping time and $\l\geq0$ the discount factor. We assume that
\begin{itemize}
\item[-] $\psi:\R^d\times[0, T]\lra[0, +\infty[$ is bounded and uniformly continuous; 
\item[-] $\ell:\R^d\times A\times[0, T]\lra[0, +\infty[$ is bounded, continuous and such that there exists a modulus of continuity $\omega_{\ell}$  for which $|\ell(x, a, t)-\ell(y, a, t)|\leq\omega_{\ell}(\|x-y\|)$ for every $x, y\in\R^d$, $a\in A$ and $t\in[0, T]$.
\end{itemize}

The value function is
\begin{equation}
\label{funzionevaloretime}
V(x, t)=\inf_{(\a\in\cA, \tau\geq t)}J(x, t, \a, \tau).
\end{equation}
In the sequel, by $\BUC(E)$ we denote the space of bounded and uniformly continuous functions on $E\subseteq\R^n$.
\par\smallskip
\begin{proposition}
\label{contofv}
Under the previous hypotheses, $V$ as in \eqref{funzionevaloretime} is in $\BUC(\R^d\times[0, T])$.
\end{proposition}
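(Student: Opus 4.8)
The plan is to obtain boundedness directly and then prove joint uniform continuity by controlling the spatial and temporal increments of $V$ separately, combining them through the triangle inequality $|V(x_1,t_1)-V(x_2,t_2)|\le|V(x_1,t_1)-V(x_2,t_1)|+|V(x_2,t_1)-V(x_2,t_2)|$. Boundedness is immediate: since $\ell,\psi\ge0$ one has $V\ge0$, while the admissible choice $\tau=t$ (empty integral) gives $V(x,t)\le\psi(x,t)\le\|\psi\|_\infty$, hence $0\le V\le\|\psi\|_\infty$. For the spatial increment at a fixed $t$, I would fix an $\ep$-optimal pair $(\a,\tau)$ for whichever of $V(x_1,t),V(x_2,t)$ is larger and feed the \emph{same} control to the other initial point. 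By \eqref{hpf0} and Gronwall's lemma, $\|y_{(x_1,t)}(s;\a)-y_{(x_2,t)}(s;\a)\|\le e^{L(s-t)}\|x_1-x_2\|\le e^{LT}\|x_1-x_2\|$ on $[t,\tau]$; since $(\a,\tau)$ is common, the two cost functionals differ only in the $x$-slot of $\ell$ and $\psi$ (their time arguments coincide), so $\omega_\ell$ and the $x$-modulus of $\psi$ yield $|V(x_1,t)-V(x_2,t)|\le T\,\omega_\ell(e^{LT}\|x_1-x_2\|)+\omega_\psi(e^{LT}\|x_1-x_2\|)+\ep$, uniformly in $t$. Letting $\ep\to0$ produces a spatial modulus for $V$.

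Temporal continuity is the heart of the matter. Fix $x$ and $t_1\le t_2$, set $\delta=t_2-t_1$. The crucial device is to reuse admissible controls \emph{at the same absolute time} $s$, so that the time argument of $\ell$ is never shifted; this is exactly what the hypotheses allow, since $\ell$ is assumed uniformly continuous only in $x$ whereas $\psi$ is uniformly continuous jointly. Two trajectories issued from $x$ at $t_1$ and at $t_2$ and driven by one and the same control on $[t_2,T]$ start, at time $t_2$, from points at distance $\le M\delta$ by \eqref{firstest}, hence stay $e^{LT}M\delta$-close thereafter by Gronwall. To prove $V(x,t_1)\le V(x,t_2)+o(1)$, take an $\ep$-optimal $(\a_2,\tau_2)$ for $(x,t_2)$, prepend an arbitrary control on $[t_1,t_2]$, and keep $\tau_2$ as stopping time: the extra running cost is at most $\|\ell\|_\infty\delta$, the discount factors differ by $O(\l\delta)$ times bounded quantities, and the remaining discrepancies are absorbed by $\omega_\ell(e^{LT}M\delta)$ and the $x$-modulus of $\psi$.

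For the reverse inequality $V(x,t_2)\le V(x,t_1)+o(1)$, take an $\ep$-optimal $(\a_1,\tau_1)$ for $(x,t_1)$ and split on the stopping time. If $\tau_1>t_2$, feed $\a_1$ restricted to $[t_2,+\infty[$ to the initial state $(x,t_2)$ and stop at $\tau_1$: the discarded running cost on $[t_1,t_2]$ is nonnegative, the discount mismatch is $O(\l\delta)$, and the trajectory drift is handled as above. If instead $\tau_1\le t_2$, I would simply stop at once, $\tau=t_2$, so that $V(x,t_2)\le\psi(x,t_2)$; here $\|y_{(x,t_1)}(\tau_1;\a_1)-x\|\le M\delta$ and $|t_2-\tau_1|\le\delta$, and it is precisely the \emph{joint} uniform continuity of $\psi$ that permits comparing $\psi(x,t_2)$ with $\psi(y_{(x,t_1)}(\tau_1;\a_1),\tau_1)\le e^{\l(\tau_1-t_1)}J(x,t_1,\a_1,\tau_1)\le e^{\l\delta}(V(x,t_1)+\ep)$ up to $\omega_\psi((M+1)\delta)$.

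Collecting these bounds gives a temporal modulus independent of $x$, and together with the spatial modulus this yields $V\in\BUC(\R^d\times[0,T])$. I expect the main obstacle to be the bookkeeping in the reverse temporal inequality, and in particular the degenerate subcase $\tau_1\le t_2$: there the optimal policy at the earlier instant stops before the later initial time, the natural ``reuse the tail'' construction is unavailable, and one must fall back on stopping immediately, leaning on the joint space–time uniform continuity of the exit cost $\psi$ rather than on any time-regularity of the running cost $\ell$.
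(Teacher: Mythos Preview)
Your proposal is correct and is precisely the ``standard arguments'' the paper invokes: the paper's own proof is a one-line sketch that records the joint trajectory estimate $\|y_{(x,t)}(s;\a)-y_{(z,\tau)}(s;\a)\|\le e^{L(T-\max(t,\tau))}(\|x-z\|+M|t-\tau|)$ and leaves the rest implicit, whereas you carry out the spatial/temporal split, the $\ep$-optimal control reuse at the same absolute time, and the degenerate subcase $\tau_1\le t_2$ explicitly. Nothing more is needed; your detailed handling of the last subcase via the joint uniform continuity of $\psi$ is exactly the point one has to check and is sound.
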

\begin{proof} It follows from standard arguments recalling that $f$ is bounded and that for all $x, z\in\R^d$, $\a\in\cA$, $t, \tau\in[0, T]$ and $s\in[\max(t, \tau), T]$, 
\begin{multline*}
\|y_{(x, t)}(s; \a)-y_{(z,\tau)}(s; \a)\|\\
\leq e^{L(T-\max(t, \tau))}(\|x-z\|+M|t-\tau|).
\end{multline*}
\end{proof}
We have the following dynamic programming principle.
\par\smallskip
\begin{proposition}
\label{dynproggen}
Assume the hypotheses of Proposition \ref{contofv}. For every $x\in\R^d$ and $t\in[0, T]$, we have
\begin{itemize}
\item[$(i)$]$V(x, t)\leq\psi(x, t)$;
\item[$(ii)$] for every $\tilde t\geq t$, $\a\in\cA$,
\begin{multline*}
V(x, t)\leq \int_t^{\tilde t}e^{-\l(s-t)}\ell(y_{(x, t)}(s;\a), \a(s), s)ds\\
+e^{-\l(\tilde t-t)}V(y_{(x, t)}(\tilde t;\a), \tilde t);
\end{multline*}
\item[$(iii)$] for any $(x, t)$ for which the strict inequality in $(i)$ holds, there exists $t_0=t_0(x, t)>0$ such that, for every $\zeta\in[t, t+t_0]$,
$$
V(x, t)=\inf_{\a\in\cA}\Bigg(\int_t^{\zeta}e^{-\l(s-t)}\ell(y_{(x, t)}(s; \a), \a(s), s)ds
$$
$$
+e^{-\l(\zeta-t)}V(y_{(x, t)}(\zeta; \a), \zeta)\Bigg).
$$
\end{itemize}
\end{proposition}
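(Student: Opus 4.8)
The plan is to prove the three items of the dynamic programming principle in order, exploiting the optimal-stopping structure. Throughout I would use the basic trajectory estimate \eqref{firstest} and the boundedness/continuity hypotheses on $\ell$ and $\psi$, together with the semigroup (concatenation) property of the controlled flow \eqref{system}.

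For item $(i)$, I would simply use the admissible choice $\tau = t$ in the cost functional. At $\tau = t$ the integral term vanishes and the discount factor $e^{-\l(\tau-t)} = 1$, so $J(x, t, \a, t) = \psi(x, t)$ for \emph{every} $\a$. Taking the infimum over $(\a, \tau)$ can only decrease this, giving $V(x, t) \le \psi(x, t)$ at once.

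For item $(ii)$, the idea is the standard suboptimality (one inequality of Bellman's principle). Fix $\tilde t \ge t$ and a control $\a \in \cA$ on $[t, \tilde t]$. I would concatenate this with a near-optimal strategy from the point $(y_{(x,t)}(\tilde t; \a), \tilde t)$: for any $\eta > 0$, pick $(\tilde\a, \tilde\tau)$ achieving $V(y_{(x,t)}(\tilde t; \a), \tilde t)$ within $\eta$, and define a competitor $(\hat\a, \hat\tau)$ that uses $\a$ on $[t, \tilde t]$ and the shifted $\tilde\a$ afterwards, stopping at $\tilde t + (\tilde\tau - \tilde t)$. By uniqueness of solutions to \eqref{system}, the concatenated trajectory agrees with $y_{(x,t)}(\cdot; \a)$ on $[t, \tilde t]$ and then continues as the trajectory from $y_{(x,t)}(\tilde t; \a)$. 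Splitting $J(x, t, \hat\a, \hat\tau)$ at $s = \tilde t$, the discount factor factorizes as $e^{-\l(s-t)} = e^{-\l(\tilde t - t)} e^{-\l(s - \tilde t)}$ on the tail, and the tail cost is exactly $J(y_{(x,t)}(\tilde t; \a), \tilde t, \tilde\a, \tilde\tau) \le V(y_{(x,t)}(\tilde t; \a), \tilde t) + \eta$. Since $V(x, t) \le J(x, t, \hat\a, \hat\tau)$, letting $\eta \to 0$ yields the claimed inequality.

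Item $(iii)$ is the genuinely delicate part, and I expect the main obstacle to lie here. The point is to upgrade the inequality in $(ii)$ to an \emph{equality} on a short time interval, which requires the reverse (superoptimality) inequality and crucially uses the \emph{strict} inequality $V(x, t) < \psi(x, t)$. The strategy is to argue that, when stopping immediately is strictly suboptimal, for a short enough horizon $t_0$ any near-optimal control must choose its stopping time $\tau \ge \zeta$, so that it never stops inside $[t, \zeta]$. I would make this quantitative: by continuity of $V$ (Proposition \ref{contofv}) and of $\psi$, the gap $\psi - V$ stays bounded away from zero on a neighborhood of $(x,t)$, say $\psi(z, r) - V(z, r) \ge \delta > 0$; combined with the trajectory estimate \eqref{firstest} and the modulus of continuity of $\psi$, this forces stopping before $\zeta$ to be strictly more expensive than continuing, provided $t_0$ is small (the cost lost by stopping early is of order $\delta$, while the running cost accrued over $[t, \zeta]$ and the discounting mismatch are $o(1)$ as $t_0 \to 0$). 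Once it is established that optimal (or near-optimal) controls do not stop before $\zeta$, the reverse inequality follows by restricting any near-optimal strategy to $[t, \zeta]$ and recognizing its tail as an admissible competitor for $V(y_{(x,t)}(\zeta; \a), \zeta)$; combined with $(ii)$ this gives equality. The careful bookkeeping needed to choose $t_0$ uniformly, and to handle the infimum (rather than a minimizer, which need not exist), is where the argument demands the most attention.
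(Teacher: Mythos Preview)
Your proposal is correct and follows essentially the same approach as the paper: item $(i)$ via the choice $\tau=t$, item $(ii)$ via concatenation with an $\varepsilon$-optimal pair $(\tilde\a,\tilde\tau)$ from the intermediate point, and item $(iii)$ by exploiting the strict gap $\psi-V>0$ to rule out early stopping. In fact, for $(iii)$ the paper only says that the argument is a time-dependent adaptation of the one in Bardi--Capuzzo Dolcetta, so your outline is already more detailed than what the paper provides.
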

\par
\begin{proof}
Inequality $(i)$ is clear since, in particular, 
$$
V(x, t)\leq J(x, t, \a, t)=\psi(x, t)\quad\text{for all } (x, t)\in\R^d\times[0, T].
$$
\par
For $(ii)$, fix $\a\in\cA$, $\tilde t\geq t$, $\varepsilon>0$ and let $(\tilde\a, \tilde\tau\geq\tilde t)$ be $\varepsilon$-optimum for $V(y_{(x, t)}(\tilde t; \a), \tilde t)$, that is
$$
J(y_{(x, t)}(\tilde t; \a), \tilde t, \tilde\a, \tilde\tau)\leq V(y_{(x, t)}(\tilde t; \a), \tilde t)+\varepsilon.
$$
Now define
$$
\hat\a(\tau)=\begin{cases}\a(\tau),&\tau\leq\tilde t\\
\tilde\a(\tau-\tilde t),&\tau>\tilde t
\end{cases}.
$$
Observe that, calling $z:=y_{(x, t)}(\tilde t; \a)$, we have
\begin{multline*}
V(x, t)\leq J(x, t, \hat\a, \tilde\tau)\\=\int_t^{\tilde\tau}e^{-\l(s-t)}\ell(y_{(x, t)}(s; \hat\a), \hat\a(s), s)ds\\
+e^{-\l(\tilde\tau-t)}\psi(y_{(x, t)}(\tilde\tau;\hat\a), \tilde\tau)\\
=\int_t^{\tilde t}e^{-\l(s-t)}\ell(y_{(x, t)}(s;\a), \a(s), s)ds\\+\int_{\tilde t}^{\tilde\tau}e^{-\l(s-t)}\ell(y_{(z, \tilde t)}(s;\tilde\a), \tilde\a(s), s)ds\\
+e^{-\l(\tilde\tau-t)}\psi(y_{(z, \tilde t)}(\tilde\tau; \tilde\a), \tilde\tau)\\
=\int_t^{\tilde t}e^{-\l(s-t)}\ell(y_{(x, t)}(s;\a), \a(s), s)ds
\\+e^{-\l(\tilde t-t)}\Bigg(\int_{\tilde t}^{\tilde\tau}e^{-\l(s-\tilde t)}\ell(y_{(z, \tilde t)}(s;\tilde\a), \tilde\a(s), s)ds
\\
+e^{-\l(\tilde\tau-\tilde t)}\psi(y_{(z, \tilde t)}(\tilde\tau; \tilde\a), \tilde\tau)\Bigg)\\
=\int_t^{\tilde t}e^{-\l(s-t)}\ell(y_{(x, t)}(s;\a), \a(s), s)ds\\+e^{-\l(\tilde t-t)}J(z, \tilde t, \tilde\a, \tilde\tau)\\
\leq\int_t^{\tilde t}e^{-\l(s-t)}\ell(y_{(x, t)}(s;\a), \a(s), s)ds\\+e^{-\l(\tilde t-t)}\Big(V(y_{(x, t)}(\tilde t; \a), \tilde t)+\varepsilon\Big).
\end{multline*}
Then, from the arbitrariness of $\varepsilon$, the inequality follows.
\par
Assertion $(iii)$ can be proved as in \cite{BardiCapuz@incollection}, taking into account the time variable too.
\end{proof}
For $x, \xi\in\R^d$ and $t\in[0, T]$, we define the Hamiltonian function by
$$
H(x, t, \xi)=\sup_{a\in A}\{-f(x, a)\cdot \xi -\ell(x, a, t)\}.
$$
In the sequel, by $(\cdot)_t$ and $D_x$ we denote the time derivative and the spatial gradient.
\par\smallskip
\begin{theorem}
\label{pbl1}
Under the hypotheses of Proposition \ref{dynproggen}, the value function $V$ is a viscosity solution of
\begin{equation}
\label{VIOT}
\begin{cases}
\max\{u(x, t)-\psi(x, t), \\ -u_t(x, t)+\l u(x, t)+H(x, t, D_xu(x, t))\}=0,\\ \mbox{\hspace{5.1cm}$(x, t)\in\mathbb{R}^d\times[0, T[$}\\
u(x, T)=\psi(x, T),\mbox{\hspace{3.97cm}$x\in\R^d$}
\end{cases}.
\end{equation}
\end{theorem}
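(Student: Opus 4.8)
The plan is to show that $V$ is simultaneously a viscosity subsolution and a viscosity supersolution of the obstacle problem \eqref{VIOT}, and to check the terminal identity separately. The terminal condition is immediate: when $t=T$ the only admissible stopping time is $\tau=T$, so $J(x,T,\a,T)=\psi(x,T)$ for every $\a$, whence $V(x,T)=\psi(x,T)$, a value attained continuously by Proposition \ref{contofv}. It is also convenient to record that, since $f$ and $\ell$ are continuous and $A$ is compact, the Hamiltonian $H(x,t,\xi)=\sup_{a\in A}\{-f(x,a)\cdot\xi-\ell(x,a,t)\}$ is continuous in all its arguments.

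For the subsolution property, let $\phi\in C^1$ and let $(x_0,t_0)\in\R^d\times[0,T[$ be a local maximum of $V-\phi$ with $V(x_0,t_0)=\phi(x_0,t_0)$; I must show $\max\{\phi-\psi,\,-\phi_t+\l\phi+H(x_0,t_0,D_x\phi)\}\le0$ at $(x_0,t_0)$. The obstacle term is non-positive by part $(i)$ of Proposition \ref{dynproggen}. For the Hamilton--Jacobi term I use part $(ii)$ with the constant control $\a\equiv a$ for an arbitrary $a\in A$: replacing $V$ by $\phi$ through the local maximum property, moving the term $e^{-\l(\tilde t-t_0)}\phi(y(\tilde t),\tilde t)$ to the left, dividing by $\tilde t-t_0$, and letting $\tilde t\downarrow t_0$ (using \eqref{firstest} to control the trajectory and the continuity of $\ell$) yields $\l\phi-\phi_t-f(x_0,a)\cdot D_x\phi\le\ell(x_0,a,t_0)$. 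Since this holds for every $a\in A$, taking the supremum over $a$ gives precisely $-\phi_t+\l\phi+H(x_0,t_0,D_x\phi)\le0$.

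For the supersolution property, let $(x_0,t_0)$ be a local minimum of $V-\phi$ with $V(x_0,t_0)=\phi(x_0,t_0)$; I must show that at least one of the two terms in the $\max$ is non-negative. If $V(x_0,t_0)=\psi(x_0,t_0)$ the obstacle term vanishes and we are done. Otherwise the strict inequality $V(x_0,t_0)<\psi(x_0,t_0)$ holds, so the localized dynamic programming \emph{equality} $(iii)$ of Proposition \ref{dynproggen} is available on $[t_0,t_0+t_0']$. For each small $\zeta>t_0$ I select an $\varepsilon(\zeta-t_0)$-optimal control $\a_\zeta$ in that equality, bound $V$ below by $\phi$ in the term $V(y(\zeta),\zeta)$, rewrite $\phi(x_0,t_0)-e^{-\l(\zeta-t_0)}\phi(y(\zeta),\zeta)$ as $-\int_{t_0}^\zeta\frac{d}{ds}\bigl(e^{-\l(s-t_0)}\phi(y(s),s)\bigr)\,ds$, and bound the resulting integrand pointwise by $\l\phi-\phi_t+H(y(s),s,D_x\phi)$ via $-f(y(s),\a_\zeta(s))\cdot D_x\phi-\ell(y(s),\a_\zeta(s),s)\le H$. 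Dividing by $\zeta-t_0$ and letting $\zeta\downarrow t_0$, once more with \eqref{firstest} and the continuity of $H$, $\phi_t$, $D_x\phi$, gives $-\phi_t+\l\phi+H(x_0,t_0,D_x\phi)\ge-\varepsilon$, and $\varepsilon$ is arbitrary.

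The subsolution direction is routine, since constant controls and the one-sided inequality $(ii)$ suffice. I expect the main obstacle to be the supersolution property: the Hamilton--Jacobi inequality can be extracted only where $V<\psi$, which is exactly what forces the case split and makes the \emph{localized equality} $(iii)$, rather than $(ii)$, indispensable. The technical care needed there is the $\zeta$-dependence of the near-optimal controls $\a_\zeta$; one must pass to the limit only \emph{after} replacing the control-dependent integrand by the control-independent $H$, relying on the uniform estimate \eqref{firstest} so that $y(s)\to x_0$ uniformly in the control as $\zeta\downarrow t_0$.
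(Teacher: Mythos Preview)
Your proposal is correct and follows essentially the same strategy as the paper: the terminal condition is immediate, the subsolution property uses $(i)$ for the obstacle and $(ii)$ with constant controls for the HJ term, and the supersolution property splits on $V=\psi$ versus $V<\psi$, invoking the localized equality $(iii)$ in the latter case with $\varepsilon(\zeta-t_0)$-optimal controls. The only cosmetic difference is that you package the discount factor inside the derivative $\tfrac{d}{ds}\bigl(e^{-\l(s-t_0)}\phi(y(s),s)\bigr)$ and pass to the Hamiltonian bound before dividing, whereas the paper separates the term $(1-e^{-\l(\zeta-t_2)})V$ and freezes the base point in $\ell$ and $f$ via \eqref{stimal}--\eqref{stimasuf} before taking the limit; both organizations lead to the same inequality.
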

\begin{proof}
Let $(x_2, t_2)\in\R^d\times[0, T[$ be a local minimum point of $V-\phi$, that is, for some $r>0$, 
\begin{equation}
\label{max}
V(x_2, t_2)-V(z, t)\leq\phi(x_2, t_2)-\phi(z, t)
\end{equation}
for every $(z, t)\in B((x_2, t_2), r)$. If $V(x_2, t_2)=\psi(x_2, t_2)$, then, obviously,
\begin{multline*}
\max\{V(x_2, t_2)-\psi(x_2, t_2),\\
 -V_t(x_2, t_2)+\l V(x_2, t_2)+H(x_2, t_2, D_xV(x_2, t_2))\}\\
\geq V(x_2, t_2)-\psi(x_2, t_2)=0
\end{multline*}
and $V$ is a supersolution of \eqref{VIOT}. Assume then $V(x_2, t_2)<\psi(x_2, t_2)$ (the only other possibility by $(i)$ of Proposition \ref{dynproggen}). For each $\varepsilon>0$ and $\zeta\geq t_2$, by $(iii)$ of Proposition \ref{dynproggen} there exists $\bar\a\in\cA$ such that
\begin{multline}
\label{disug1v}
V(x_2, t_2)\geq\int_{t_2}^{\zeta}e^{-\l(s-t_2)}\ell(\bar y_{(x_2, t_2)}(s), \bar\a(s), s)ds\\+e^{-\l(\zeta-t_2)}V(\bar y_{(x_2, t_2)}(\zeta), \zeta)-(\zeta-t_2)\varepsilon,
\end{multline}
where $\bar y_{(x_2, t_2)}(s)=y_{(x_2, t_2)}(s; \bar\a)$ is the trajectory of \eqref{system} corresponding to $\bar\a$. Now, by the hypotheses on $\ell$ and by \eqref{firstest}, we have
\begin{equation}
\label{stimal}
|\ell(\bar y_{(x_2, t_2)}(s), \bar\a(s), s)-\ell(x_2, \bar\a(s), s)|\leq\omega_{\ell}(M(s-t_2)),
\end{equation}
and, by \eqref{hpf0} and \eqref{firstest} again,
\begin{equation}
\label{stimasuf}
\|f(\bar y_{(x_2, t_2)}(s), \bar\a(s))-f(x_2, \bar\a(s))\|\leq LM(s-t_2).
\end{equation}
By \eqref{stimal}, the integral on \eqref{disug1v} can be written as
$$
\int_{t_2}^{\zeta}e^{-\l(s-t_2)}\ell(x_2, \bar\a(s), s)ds+o(\zeta-t_2)\quad\text{as}\ \zeta\ra t_2,
$$
where $o(\zeta-t_2)$ indicates a function $g(\zeta-t_2)$ such that $\lim_{\zeta\ra t_2}g(\zeta-t_2)/(\zeta-t_2)=0$ and, in this case, $|g(\zeta-t_2)|\leq(\zeta-t_2)\omega_{\ell}(M(\zeta-t_2))$. Then, by \eqref{max} with $(z, t)=(\bar y_{(x_2, t_2)}(\zeta), \zeta)$ and by \eqref{disug1v}, we obtain
\begin{multline}
\label{int1}
\phi(x_2, t_2)-\phi(\bar y_{(x_2, t_2)}(\zeta), \zeta)-\int_{t_2}^{\zeta}e^{-\l(s-t_2)}\ell(x_2, \bar\a(s), s)ds\\
+(1-e^{-\l(\zeta-t_2)})V(\bar y_{(x_2, t_2)}(\zeta), \zeta)\geq-(\zeta-t_2)\varepsilon+o(\zeta-t_2).
\end{multline}
Moreover, by \eqref{firstest}, \eqref{stimasuf} and the fact that $\phi\in C^1$, we have
\begin{multline}
\label{int2}
\phi(x_2, t_2)-\phi(\bar y_{(x_2, t_2)}(\zeta), \zeta)=-\int_{t_2}^{\zeta}\frac{d}{ds}\phi(\bar y_{(x_2, t_2)}(s), s)ds\\
=-\int_{t_2}^{\zeta}(D_x\phi(\bar y_{(x_2, t_2)}(s), s)\cdot f(\bar y_{(x_2, t_2)}(s), \bar\a(s))\\+\phi_t(\bar y_{(x_2, t_2)}(s), s))ds\\
=-\int_{t_2}^{\zeta}(D_x\phi(x_2, s)\cdot f(x_2, \bar\a(s))+\phi_t(x_2, s))ds + o(\zeta-t_2).
\end{multline}
Putting \eqref{int2} into \eqref{int1} and adding $\pm\int_{t_2}^{\zeta}\ell(x_2, \bar\a(s), s)ds$, we get
\begin{multline}
\label{int3}
\int_{t_2}^{\zeta}\{-D_x\phi(x_2, s)\cdot f(x_2, \bar\a(s))\\
-\phi_t(x_2, s)-\ell(x_2, \bar\a(s), s)\}ds\\+\int_{t_2}^{\zeta}(1-e^{-\l(s-t_2)})\ell(x_2, \bar\a(s), s)ds\\
+(1-e^{-\l(\zeta-t_2)})V(\bar y_{(x_2, t_2)}(\zeta), \zeta)\geq-(\zeta-t_2)\varepsilon+o(\zeta-t_2).
\end{multline}
The first integral is estimated from above by
$$
\int_{t_2}^{\zeta}\sup_{a\in A}\{-D_x\phi(x_2, s)\cdot f(x_2, a)-\phi_t(x_2, s)-\ell(x_2, a, s)\}ds
$$
and the second one is $o(\zeta-t_2)$ by the hypotheses on $\ell$. Dividing \eqref{int3} by $\zeta-t_2$ and letting $\zeta\ra t_2$, we obtain
\begin{multline*}
-\phi_t(x_2, t_2)+\sup_{a\in A}\{-D_x\phi(x_2, t_2)\cdot f(x_2, a)-\ell(x_2, a, t_2)\}\\
+\l V(x_2, t_2)\geq-\varepsilon,
\end{multline*}
where we also used the continuity of $V$ and $\bar y_{(x_2, t_2)}$ at $(x_2, t_2)$ and $t_2$ respectively. Since $\varepsilon$ is arbitrary, the supersolution condition follows.
\par
The subsolution condition easily comes from the time-independent case in \cite{BardiCapuz@incollection}.
\end{proof}
\par
For the uniqueness, we show that if $u$ is a viscosity solution of \eqref{VIOT}, then
$$
u(x, t)=\inf_{\a\in\cA}J(x, t, \a, \tau^*_{(x, t)}(\a))
$$
for some $\tau^*_{(x, t)}(\a)$ such that
$$
\inf_{\a\in\cA}J(x, t, \a, \tau^*_{(x, t)}(\a))=\inf_{(\a\in\cA, \tau\geq t)}J(x, t, \a, \tau)=V(x, t),
$$
and hence $V$ is the unique viscosity solution.
\par\smallskip
\begin{lemma}
\label{proput}
Let $\Omega\subseteq\R^d$ be an open subset. For $x\in\Omega$, $t\in[0, T]$ and $\a\in\cA$, we set
$$
\tau_{(x, t)}(\a):=\min\{\inf\{\tau\geq t:y_{(x, t)}(\tau; \a)\notin\Omega\}, T\}.
$$
Then, under the hypotheses of Theorem \ref{pbl1}, for $u\in\BUC(\Omega\times[0, T])$ the following statements are equivalent:
\begin{itemize}
\item[$(i)$] $e^{-\l(s-t)}u(y_{(x, t)}(s; \a), s)-e^{-\l(\tau-t)}u(y_{(x, t)}(\tau; \a), \tau)$\\
$\displaystyle \leq\int_s^{\tau}e^{-\l(\zeta-t)}\ell(y_{(x, t)}(\zeta; \a), \a(\zeta), \zeta)d\zeta,$\\
with $ \a\in\cA, \ x\in\Omega, \ t\leq s\leq\tau<\tau_{(x, t)}(\a)$;
\item[$(ii)$]$-u_t(x, t)+\l u(x, t)+H(x, t, D_xu(x, t))\leq0$, 
\item[$(iii)$]$u_t(x, t)-\l u(x, t)-H(x, t, D_xu(x, t))\geq0$,
\end{itemize}
where $(ii)$ and $(iii)$ are understood in the viscosity sense for $(x, t)\in\Omega\times[0, T[$. 
\end{lemma}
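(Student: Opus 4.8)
The plan is to prove the two equivalences $(i)\Leftrightarrow(ii)$ and $(ii)\Leftrightarrow(iii)$ separately. The first identifies the integral (dynamic-programming) inequality with the viscosity subsolution property of $-u_t+\lambda u+H=0$ in $\Omega$; the second is where the structure of the problem really enters. Writing $\mathcal{G}[\phi]:=-\phi_t+\lambda u+H(x,t,D_x\phi)$, observe that both $(ii)$ and $(iii)$ assert the single inequality $\mathcal{G}\le 0$, the former tested with $C^1$ functions touching $u$ from above (local maxima of $u-\phi$) and the latter with functions touching from below (local minima). Thus, once $(i)\Leftrightarrow(ii)$ is established, it remains only to show that, for this Hamiltonian, testing the subsolution inequality at maxima is equivalent to testing it at minima.

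For $(i)\Rightarrow(ii)$ I would argue by the same kind of test-function computation carried out in the proof of Theorem \ref{pbl1}. Fix $(x_0,t_0)\in\Omega\times[0,T[$ and $\phi\in C^1$ with a local maximum of $u-\phi$ there; subtracting a constant we may take $u\le\phi$ near $(x_0,t_0)$ with equality at $(x_0,t_0)$. For a constant control $\alpha\equiv a$ and $\delta>0$ small (so that the trajectory stays in the open set $\Omega$, hence $t_0+\delta<\tau_{(x_0,t_0)}(\alpha)$), apply $(i)$ with $s=t=t_0$ and $\tau=t_0+\delta$, and bound its left-hand side from below by the same expression with $u$ replaced by $\phi$ (legitimate since $u\le\phi$ with equality at $(x_0,t_0)$). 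Dividing by $\delta$, letting $\delta\to 0^+$ and using $\phi\in C^1$, \eqref{firstest} and the continuity of $\ell$, one obtains $-\phi_t(x_0,t_0)+\lambda u(x_0,t_0)-f(x_0,a)\cdot D_x\phi(x_0,t_0)-\ell(x_0,a,t_0)\le 0$; taking the supremum over $a\in A$ gives $(ii)$.

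For the converse $(ii)\Rightarrow(i)$ the natural route is to show that, for every $\alpha\in\cA$, the map $s\mapsto e^{-\lambda(s-t)}u(y_{(x,t)}(s;\alpha),s)+\int_t^s e^{-\lambda(\zeta-t)}\ell(y_{(x,t)}(\zeta;\alpha),\alpha(\zeta),\zeta)\,d\zeta$ is non-decreasing on $[t,\tau_{(x,t)}(\alpha)[$, since this is precisely a restatement of $(i)$. When $u\in C^1$ this is immediate: the subsolution inequality applied with $a=\alpha(s)$ gives $\frac{d}{ds}u(y(s),s)\ge\lambda u(y(s),s)-\ell(y(s),\alpha(s),s)$, whence the derivative of the displayed map is $\ge 0$. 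For merely $u\in\BUC$ I would recover this by a regularization (sup-convolution in $x$), which produces locally Lipschitz, semiconvex approximations that are subsolutions up to a vanishing error and for which the chain rule holds almost everywhere along the trajectory; passing to the limit yields the monotonicity. This is the adaptation to the time-dependent finite-horizon setting of the corresponding step in \cite{BardiCapuz@incollection}.

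The main obstacle is the equivalence $(ii)\Leftrightarrow(iii)$, i.e. that the inequality $\mathcal{G}\le 0$ need only be tested from one side. This rests crucially on the convexity of $\xi\mapsto H(x,t,\xi)$ (it is a supremum of affine functions), which makes $(\rho,\xi)\mapsto-\rho+\lambda u+H(x,t,\xi)$ convex in the full gradient variable. The idea is that at a point where $u-\phi$ has a minimum the subdifferential $D^-u$ is a convex set whose extreme points are limits of gradients $Du(x_k,t_k)$ taken at nearby points of differentiability; there the two semidifferentials coincide, so the already-known inequality passes to the extreme points by the continuity of $H,\ell,f$, and then to all of $D^-u$ by convexity of $\mathcal{G}$. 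The symmetric argument gives $(iii)\Rightarrow(ii)$. The delicate point, and the one I expect to require the most care, is that $u$ is only assumed bounded and uniformly continuous, so a.e.\ differentiability is not available directly; this is again handled by first regularizing $u$ through sup- and inf-convolutions (semiconvex, resp.\ semiconcave, hence twice differentiable a.e.), proving the equivalence for the regularizations, and removing the regularization at the end. This is the Barron--Jensen phenomenon for Hamiltonians convex in the gradient.
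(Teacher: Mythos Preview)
Your proposal is correct and is essentially the approach the paper has in mind: the paper's own proof consists of the single sentence ``It is a careful adaption to the time-dependent case of the one in \cite{BardiCapuz@incollection}'', and what you have written is precisely such an adaptation---the integral inequality $(i)$ is identified with the viscosity subsolution property $(ii)$ by the standard differentiation-along-trajectories argument (forward) together with sup-convolution regularization (backward), while the equivalence $(ii)\Leftrightarrow(iii)$ is the Barron--Jensen result for Hamiltonians convex in the gradient, which is exactly how the time-independent case is handled in \cite{BardiCapuz@incollection}. One small technical remark: in the step $(ii)\Rightarrow(i)$ you regularize by sup-convolution ``in $x$''; for the present time-dependent, finite-horizon problem the regularization should be taken jointly in $(x,t)$ so that the approximations are Lipschitz (hence a.e.\ differentiable) in both variables and the chain rule along $s\mapsto(y(s),s)$ is available.
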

\begin{proof}
It is a careful adaption to the time-dependent case of the one in \cite{BardiCapuz@incollection}.
\end{proof}
\par\smallskip
\begin{theorem}
\label{unicmodo1}
Let $u\in\BUC(\R^d\times[0, T])$ be a viscosity solution of \eqref{VIOT}. Then, under the hypotheses of Theorem \ref{pbl1},
$$
u(x, t)=\inf_{\a\in\cA}J(x, t, \a, \tau^*_{(x, t)}(\a))=V(x, t)
$$
for every $(x, t)\in\R^d\times[0, T]$, where
\begin{multline}
\label{star}
\tau^*:=\tau^*_{(x, t)}(\a)=\inf\{\tau\in[t, T]:\\u(y_{(x, t)}(\tau; \a), \tau)=\psi(y_{(x, t)}(\tau; \a), \tau)\}.
\end{multline}
\end{theorem}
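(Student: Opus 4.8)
The plan is to establish the two inequalities $u\le V$ and $u\ge\inf_{\a\in\cA}J(x,t,\a,\tau^*_{(x,t)}(\a))$ separately. Since the stopping rule $\tau^*$ is admissible we trivially have $\inf_{\a}J(x,t,\a,\tau^*_{(x,t)}(\a))\ge V(x,t)$, so the two inequalities force $u=V=\inf_\a J(\cdot,\tau^*)$ at once. Write $\cO:=\{(x,t)\in\R^d\times[0,T[\,:\,u(x,t)<\psi(x,t)\}$, open by continuity. From $u$ solving \eqref{VIOT} I first extract the two facts I shall use: testing the subsolution inequality for the maximum gives simultaneously $u\le\psi$ on $\R^d\times[0,T[$ and that $u$ is a viscosity subsolution of $-u_t+\l u+H=0$ there; testing the supersolution inequality at a point of $\cO$, where the strict inequality $u-\psi<0$ discards the first entry of the maximum, gives that $u$ is a viscosity supersolution of the same equation on $\cO$.

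For $u\le V$ I apply Lemma \ref{proput} with $\Omega=\R^d$, so that $\tau_{(x,t)}(\a)=T$ for every $\a$. The global subsolution property is exactly item $(ii)$, hence $(i)$ holds; taking $s=t$ in $(i)$ and combining with $u\le\psi$ yields, for every $\a\in\cA$ and $t\le\tau<T$,
\begin{multline*}
u(x,t)\le\int_t^{\tau}e^{-\l(\zeta-t)}\ell(y_{(x,t)}(\zeta;\a),\a(\zeta),\zeta)\,d\zeta\\
+e^{-\l(\tau-t)}\psi(y_{(x,t)}(\tau;\a),\tau)=J(x,t,\a,\tau).
\end{multline*}
The endpoint cases $\tau=t$ and $\tau=T$ follow from $u\le\psi$ and from continuity (using $u(\cdot,T)=\psi(\cdot,T)$), so $u(x,t)\le J(x,t,\a,\tau)$ for every admissible $(\a,\tau)$, and infimizing gives $u\le V$.

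For the reverse inequality I note first that $\tau^*:=\tau^*_{(x,t)}(\a)$ in \eqref{star} is well defined with $\tau^*\le T$, since $u(\cdot,T)=\psi(\cdot,T)$ places $T$ in the set over which the infimum is taken, and that by continuity $u(y_{(x,t)}(\tau^*;\a),\tau^*)=\psi(y_{(x,t)}(\tau^*;\a),\tau^*)$. If $(x,t)\notin\cO$, i.e.\ $u(x,t)=\psi(x,t)$, then $\tau^*=t$ for every $\a$ and $J(x,t,\a,\tau^*)=\psi(x,t)=u(x,t)$. If $(x,t)\in\cO$, I use the supersolution property on $\cO$: for each $\varepsilon>0$ I build a control $\a\in\cA$ — by selecting, measurably in time, actions that are $\varepsilon$-optimal in the supremum defining $H$ along the trajectory, and invoking the supersolution inequality to control the accumulated error — for which the reverse of $(i)$ holds up to the exit time of $\cO$, which is exactly $\tau^*$:
\begin{multline*}
u(x,t)\ge\int_t^{\tau^*}e^{-\l(s-t)}\ell(y_{(x,t)}(s;\a),\a(s),s)\,ds\\
+e^{-\l(\tau^*-t)}u(y_{(x,t)}(\tau^*;\a),\tau^*)-\varepsilon.
\end{multline*}
Because $u=\psi$ at $(y_{(x,t)}(\tau^*;\a),\tau^*)$, the right-hand side equals $J(x,t,\a,\tau^*)-\varepsilon$, so $\inf_\a J(x,t,\a,\tau^*)\le u(x,t)+\varepsilon$; letting $\varepsilon\to0$ gives $u(x,t)\ge\inf_\a J(x,t,\a,\tau^*)$. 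Combining the two steps, $u\ge\inf_\a J(\cdot,\tau^*)\ge V\ge u$, so all coincide.

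The only genuinely delicate point is the near-optimal control construction in the last step: converting the viscosity supersolution property on the open continuation region $\cO$ into the approximate reverse integral inequality along an explicitly built measurable control, carried all the way to the stopping time $\tau^*$. I expect this to be the main obstacle, because Lemma \ref{proput} — applied to either the subsolution or the supersolution — produces only the one-sided inequality $(i)$, hence only the bound $u\le$ cost-to-go, and never a lower bound; the lower bound must instead come from an approximate optimal synthesis. I would handle it by adapting the corresponding argument of \cite{BardiCapuz@incollection} to the present finite-horizon setting, with the time variable entering exactly as in the proof of Theorem \ref{pbl1}.
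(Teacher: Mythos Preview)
Your proposal is correct and follows essentially the same route as the paper: split into $u\le V$ via the global subsolution property and Lemma~\ref{proput} with $\Omega=\R^d$, and $u\ge\inf_\a J(\cdot,\tau^*)$ by working on the continuation region $\cO$ (the paper's $\cC$) where the full equation holds. The only cosmetic difference is that the paper, instead of spelling out the $\varepsilon$-optimal control construction, simply re-invokes Lemma~\ref{proput} on $\cC$ to write the dynamic-programming \emph{equality} $u(x,t)=\inf_{\a}(\ldots)$ for $t\le\tau<\tau^*$ and then lets $\tau\to\tau^*$; your observation that the lemma as stated only yields the subsolution direction is accurate, and the paper is tacitly using the supersolution counterpart from \cite{BardiCapuz@incollection}, which is precisely the approximate optimal synthesis you describe.
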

\begin{proof}
At first we observe that, since $u(x, T)=\psi(x, T)$ for every $x\in\R^d$, the set in \eqref{star} is always non empty, and hence $\tau^*\leq T$ always exists. Now let $u\in\BUC(\R^d\times[0, T])$ be a viscosity solution of \eqref{VIOT} and consider the open set $\cC=\{(x, t)\in\R^d\times[0, T[:u(x, t)<\psi(x, t)\}$. Similarly to \cite{BardiCapuz@incollection}, it can be proved that
\begin{equation}
\label{tr1}
u(x, t)\leq\psi(x, t),\quad(x,t)\in\R^d\times[0, T]
\end{equation}
and that
\begin{multline}
\label{tr2}
-u_t(x, t)+\l u(x, t) + H(x, t, D_xu(x, t))\leq0,\\ (x, t)\in\R^d\times[0, T[,
\end{multline}
\begin{equation}
\label{tr3}
-u_t(x, t)+\l u(x, t) + H(x, t, D_xu(x, t))=0,\ \ \ (x, t)\in\cC,
\end{equation}
in the viscosity sense (the validity of \eqref{tr1} at $t=T$ comes from the boundary condition in \eqref{VIOT}).
Now we apply Lemma \ref{proput} with $\Omega=\R^d$, $s=t$ and, by \eqref{tr1}, \eqref{tr2}, we get
\begin{multline*}
u(x, t)\leq e^{-\l(\tau-t)}u(y_{(x, t)}(\tau; \a), \tau)\\
+\int_t^{\tau}e^{-\l(\zeta-t)}\ell(y_{(x, t)}(\zeta; \a), \a(\zeta), \zeta)d\zeta\\
\leq e^{-\l(\tau-t)}\psi(y_{(x, t)}(\tau; \a), \tau)\\
+\int_t^{\tau}e^{-\l(\zeta-t)}\ell(y_{(x, t)}(\zeta; \a), \a(\zeta), \zeta)d\zeta
\end{multline*}
for all $t\leq\tau\leq T$ and $\a\in\cA$. Then
$$
u(x, t)\leq\inf_{(\a\in\cA, \tau\geq t)}J(x, t, \a, \tau)=V(x, t).
$$
For the reverse inequality, assume at first $(x, t)\notin\cC$. In this case, $u(x, t)=\psi(x, t)$ and $\tau^*=t$. Then
\begin{multline*}
u(x, t)=\psi(x, t)=J(x, t, \a, \tau^*)\\
\geq\inf_{(\a\in\cA, \tau\geq t)}J(x, t, \a, \tau)=V(x, t).
\end{multline*}
Now suppose $(x, t)\in\cC$, so that \eqref{tr3} holds. Applying Lemma \ref{proput} with $\Omega=\cC$ and $s=t$, we obtain
\begin{multline*}
u(x, t)=\inf_{\a\in\cA}\Bigg(e^{-\l(\tau-t)}u(y_{(x, t)}(\tau; \a), \tau)\\
+\int_t^{\tau}e^{-\l(\zeta-t)}\ell(y_{(x, t)}(\zeta;\a), \a(\zeta), \zeta)d\zeta\Bigg)
\end{multline*}
for every $t\leq\tau<\tau^*\leq T$. Letting $\tau\rightarrow\tau^*$, we get
\begin{multline*}
u(x, t)=\inf_{\a\in\cA}J(x, t, \a, \tau^*)\geq\inf_{(\a\in\cA, \tau\in[t, \tau^*])}J(x, t, \a, \tau)\\ \geq\inf_{\a\in\cA, \tau\in[t, T])}J(x, t, \a, \tau)=V(x, t)
\end{multline*}
since $u(y_{(x, t)}(\tau^*; \a), \tau^*)=\psi(y_{(x, t)}(\tau^*; \a), \tau^*)$.
\end{proof}
\subsection{Equivalence of the two models}
\label{equivalenza}
Next step is to show the equivalence between the optimal switching problem and the family of the optimal stopping ones, i.e.,  $V(x, t, p)=V_p(x, t)$ for every $(x, t, p)\in\R^d\times[0, T]\times\cI$. Here, and in the sequel, $V$ is the value function defined in \eqref{eq:V-switching} and $V_p$ is the value function defined backwardly as in \eqref{funzionivaloritimep}.
\par\smallskip
\begin{proposition}
\label{cont1}
Under the hypotheses of \S\ref{firstrel}, we have
\begin{itemize}
\item[$(i)$] $V\in\BUC(\mathbb{R}^d\times[0, T])$ for every $p\in\cI$;
\item[$(ii)$] for every $p$, the value functions $V_p$ are bounded and uniformly continuous too. 
\end{itemize}
\end{proposition}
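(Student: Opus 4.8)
The plan is to establish the two assertions separately, treating $(ii)$ first because it is self-contained once Proposition~\ref{contofv} is available, and then handling $(i)$ by direct estimates on the switching functional, since the equivalence $V(x,t,p)=V_p(x,t)$ is only proved later in \S\ref{equivalenza} and cannot be used here.

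For $(ii)$ I would argue by backward induction on the number $k=N-\sum_i p^i$ of switches still to be performed. The base case $k=0$ is $p=\bar p$, where $V_{\bar p}\equiv0$ is trivially bounded and uniformly continuous. For the inductive step, fix $p\neq\bar p$ and assume $V_{p'}\in\BUC(\R^d\times[0,T])$ for every $p'\in\cI_p$ (each such $p'$ has strictly more visited targets, hence strictly smaller $k$). Since $\cI_p$ is finite, I would interchange the two infima in \eqref{funzionivaloritimep} and write
\[
V_p(x,t)=\min_{p'\in\cI_p} V_p^{p'}(x,t),\qquad V_p^{p'}(x,t):=\inf_{(\a,\tau)}J_p(x,t,\a,\tau,p').
\]
For each fixed $p'$, the quantity $V_p^{p'}$ is exactly the value function of a time-dependent optimal stopping problem of the type in \S\ref{opttime}, with dynamics $f(\cdot,\cdot,p)$, running cost $\ell(\cdot,\cdot,p,\cdot)$ and stopping cost $\psi_{p,p'}(x,t):=C(x,p,p')+V_{p'}(x,t)$. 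The hypotheses of \S\ref{firstrel} ensure that $f(\cdot,\cdot,p)$ and $\ell(\cdot,\cdot,p,\cdot)$ satisfy the assumptions of Proposition~\ref{contofv}, while $\psi_{p,p'}$ is bounded and uniformly continuous because $C(\cdot,p,p')$ is (bounded and uniformly continuous) and $V_{p'}\in\BUC$ by induction. Proposition~\ref{contofv} then gives $V_p^{p'}\in\BUC$, and since a minimum of finitely many $\BUC$ functions is again $\BUC$ (via $|\min_i a_i-\min_i b_i|\le\max_i|a_i-b_i|$), we conclude $V_p\in\BUC$.

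For $(i)$ I would work directly with $J(x,t,p,u)$, at a fixed discrete state $p$. Boundedness is immediate: $V\ge0$ since $\ell,C\ge0$, and choosing an admissible chain from $p$ to $\bar p$ with switching instants collapsing to $t$ (legitimate since $V$ is an infimum) makes the running integrals vanish and the positions $y(t_j)$ tend to $x$, giving $V(x,t,p)\le\sum_j C(x,p_{j-1},p_j)\le N\|C\|_\infty$. For continuity I would split the variables. Continuity in $x$ at fixed $t$ is the easy half: the admissible set $\cU_{(p,t)}$ does not depend on $x$, so a single control $u$ serves two initial points $x,z$; the trajectory estimate from the proof of Proposition~\ref{contofv} gives $\|y_{(x,t)}(s;\a)-y_{(z,t)}(s;\a)\|\le e^{LT}\|x-z\|$ on the common time grid, and the uniform continuity of $\ell$ and $C$ transfers this to $|V(x,t,p)-V(z,t,p)|\le\omega(\|x-z\|)$, uniformly in $t$.

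Continuity in $t$ at fixed $x$ is where the genuine difficulty lies, and it is the step I expect to be the main obstacle. For $t\le\tau$, the forward direction $V(x,t,p)\le V(x,\tau,p)+\omega(\tau-t)$ is manageable: an $\ep$-optimal control for $(x,\tau)$ has all switching instants $\ge\tau\ge t$, hence is admissible for initial time $t$ as well, and one compares the two costs using $\|y_{(x,t)}(s;\a)-y_{(x,\tau)}(s;\a)\|\le e^{LT}M(\tau-t)$ together with the boundedness of $\ell$ on the extra interval $[t,\tau]$ and the uniform continuity of $\ell,C$. The reverse inequality $V(x,\tau,p)\le V(x,t,p)+\omega(\tau-t)$ is the delicate one: an $\ep$-optimal control for the earlier time $t$ may schedule switches in $[t,\tau)$, which are inadmissible at initial time $\tau$, so one must reschedule by clipping $t_j\mapsto\max(t_j,\tau)$ and then separating the coincident instants by infinitesimal gaps to preserve both the strict ordering $t_1<\cdots<t_m$ and the admissible destination chain $p_1,\ldots,p_m=\bar p$. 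The point to control carefully is that this rescheduling displaces each affected switching position by at most $M(\tau-t)$ and alters the running cost only on a set of measure $O(\tau-t)$, so that, letting the separating gaps vanish, the cost increment is bounded by $N\,\omega_C(M(\tau-t))+\|\ell\|_\infty(\tau-t)+o(1)$, a modulus of $\tau-t$. Combining this with the $x$-estimate, all bounds being uniform since $e^{L(T-\cdot)}\le e^{LT}$, yields $V(\cdot,\cdot,p)\in\BUC$. (Alternatively, one could establish a one-step dynamic programming identity for $V(\cdot,\cdot,p)$ and mirror the induction of $(ii)$, but this is tantamount to the equivalence proved later, so I keep $(i)$ self-contained.)
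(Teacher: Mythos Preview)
Your proposal is correct and follows essentially the same route as the paper, with two cosmetic differences worth noting. For $(ii)$, the paper also proceeds by backward induction on the level $\sum_i p^i$, but instead of splitting $V_p=\min_{p'\in\cI_p}V_p^{p'}$ into finitely many stopping problems, it absorbs the minimum into a single stopping cost $\psi_p(x,t):=\inf_{p'\in\cI_p}(C(x,p,p')+V_{p'}(x,t))$ and applies Proposition~\ref{contofv} once; the two reductions are obviously equivalent since $\ell$ does not depend on $p'$. For $(i)$, the paper's proof is the one-liner ``we may act as in Proposition~\ref{contofv}'', so your detailed argument---in particular the rescheduling of early switches via $t_j\mapsto\max(t_j,\tau)$ plus infinitesimal separation to recover the strict ordering---is a legitimate fleshing-out of what the paper leaves implicit, and is exactly the kind of adaptation needed when passing from a single stopping time to a finite chain of switching times. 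One small caveat: both your bound $V(x,t,p)\le N\|C\|_\infty$ and the paper's claim that $\psi_p$ is bounded tacitly use boundedness of $C$, which is not listed among the hypotheses of \S\ref{firstrel}; this is a gap in the paper's assumptions rather than in your argument.
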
 
\begin{proof}
For $(i)$, we may act as in Proposition \ref{contofv} in \S\ref{opttime}.
For $(ii)$, by the backward definition of $V_p$, as in \S\ref{subsec:optimal-stopping}, note that at the levels $p$ with $\sum_{i}p^i=N-1$, the stopping cost is just $C$ and hence does not depend on the value function $V_{p'}$ at lower levels $p'\in\cI_p$. For higher levels $p$ such that $\sum_{i}p^i<N-1$, let us define
\begin{equation}
\label{costopsi}
\psi_p(x, t):=\inf_{p'\in\cI_p}(C(x, p, p')+V_{p'}(x, t)),\quad x\in\R^d,
\end{equation}
and recalling that $\ell$ does not depend on $p'\in\cI_p$, we have
\begin{multline*}
V_p(x,t)=\inf_{(\a, \tau)}\Bigg(\int_t^{\tau}e^{-\l(s-t)}\ell(y(s), \a(s), p, s)ds\\
+e^{-\l(\tau-t)}\psi_p(y(\tau), \tau)\Bigg).
\end{multline*}
By the backward definition of $V_p$ at every level $p\in\cI$, the stopping cost $\psi_p(x, t)$ can be assumed as known and hence, in particular, bounded and uniformly continuous. Again, the thesis comes from the results in \S\ref{opttime}.
\end{proof}
\par\smallskip
\begin{proposition}
\label{dynprog}
Under the hypotheses of Proposition \ref{cont1}, for all $x\in\mathbb{R}^d$, $t\in[0, T]$ and $p\in\cI$, we have
\begin{multline*}
V(x, t, p)=\inf_{(\a, \tau, p'\in\cI_p)}\Bigg(\int_t^{\tau}e^{-\l(s-t)}\ell(y(s), \a(s), p, s)ds\\
+e^{-\l(\tau-t)}\Big(C(y(\tau), p, p')+V_{p'}(y(\tau), \tau)\Big)\Bigg).
\end{multline*}
\noindent
As a consequence, $V(x,t,p)=V_p(x,t)$ for all $(x,t,p)$.
\end{proposition}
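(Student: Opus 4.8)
The plan is to read the displayed identity as the substantive claim and to obtain $V(x,t,p)=V_p(x,t)$ as a corollary of it: comparing its right-hand side with the definition \eqref{funzionivaloritimep} of $V_p$, and recalling that $\ell$ does not depend on $p'\in\cI_p$, that right-hand side is exactly $\inf_{(\a,\tau,p')}J_p(x,t,\a,\tau,p')=V_p(x,t)$. I would therefore focus on the dynamic programming identity, which I establish by first proving the ``pure'' dynamic programming principle for the switching value function,
\begin{multline}
\label{pureDPP}
V(x,t,p)=\inf_{(\a,\tau,p'\in\cI_p)}\Bigg(\int_t^{\tau}e^{-\l(s-t)}\ell(y(s),\a(s),p,s)\,ds\\
+e^{-\l(\tau-t)}\Big(C(y(\tau),p,p')+V(y(\tau),\tau,p')\Big)\Bigg),
\end{multline}
valid for $p\neq\bar p$, and then upgrading $V(y(\tau),\tau,p')$ to $V_{p'}(y(\tau),\tau)$ by a backward induction on the level $\sum_i p^i$.

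To obtain \eqref{pureDPP} I start from the definition \eqref{eq:V-switching} and decompose the cost of a generic admissible string $u=(\a,m,t_1,\ldots,t_m,p_1,\ldots,p_{m-1})$, with $p_0=p$, by isolating its first summand. Factoring $e^{-\l(t_1-t)}$ out of the remaining terms and writing $e^{-\l(s-t)}=e^{-\l(t_1-t)}e^{-\l(s-t_1)}$, the tail $\sum_{j=2}^m(\cdots)$ is recognized as $J(y(t_1),t_1,p_1,u')$, where $u'=(\a,m-1,t_2,\ldots,t_m,p_2,\ldots,p_{m-1})$ is an admissible string for the problem started at $(y(t_1),t_1,p_1)$; this yields
\begin{multline*}
J(x,t,p,u)=\int_t^{t_1}e^{-\l(s-t)}\ell(y(s),\a(s),p,s)\,ds\\
+e^{-\l(t_1-t)}\Big(C(y(t_1),p,p_1)+J(y(t_1),t_1,p_1,u')\Big).
\end{multline*}
The two inequalities in \eqref{pureDPP} then follow by the concatenation argument already used in Proposition \ref{dynproggen}: for ``$\leq$'' one glues an arbitrary $\a$ on $[t,\tau]$ and a first switch to $p'\in\cI_p$ at $\tau$ with an $\ep$-optimal string for $V(y(\tau),\tau,p')$, producing an admissible $u$ for $(x,t,p)$; for ``$\geq$'' one takes an $\ep$-optimal $u$, identifies $\tau=t_1$ and $p'=p_1$, observes that the tail $u'$ is admissible for the subproblem so that $J(y(t_1),t_1,p_1,u')\geq V(y(t_1),t_1,p_1)$, and lets $\ep\to0$.

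It then remains to run the backward induction on $k=\sum_i p^i$. For the base case $p=\bar p$ we have $\cI_{\bar p}=\emptyset$, so no switch is needed and $V(x,t,\bar p)=0=V_{\bar p}$. Assume now $V(\cdot,\cdot,q)=V_q$ for every $q\in\cI$ with $\sum_i q^i>k$, and let $p$ satisfy $\sum_i p^i=k<N$. Every $p'\in\cI_p$ has $\sum_i (p')^i>k$, so the inductive hypothesis gives $V(y(\tau),\tau,p')=V_{p'}(y(\tau),\tau)$; substituting this into \eqref{pureDPP} produces exactly the identity displayed in the statement, whose right-hand side equals $V_p(x,t)$ by \eqref{funzionivaloritimep}. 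Hence $V(x,t,p)=V_p(x,t)$, which closes the induction and simultaneously delivers both assertions.

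I expect the main obstacle to be the rigorous proof of \eqref{pureDPP} rather than the induction. One must verify that the first switching time and destination of an admissible string can be identified with the pair $(\tau,p')$ of the subproblem, that concatenation and truncation preserve admissibility --- in particular the constraints $p_1\in\cI_p$, $p_{i+1}\in\cI_{p_i}$ and the terminal requirement $p_m=\bar p$ --- and that the time-shift bookkeeping for the measurable control $\a$ (the analogue of the map $\hat\a$ in the proof of Proposition \ref{dynproggen}) is carried out so that the trajectory pieces agree at $t_1$.
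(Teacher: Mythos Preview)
Your proposal is correct and follows essentially the same route as the paper: a backward induction on the level $\sum_i p^i$, based on decomposing the switching cost $J$ by isolating the first switch and recognizing the tail as the cost of the subproblem started at $(y(t_1),t_1,p_1)$. The only difference is organizational: you first isolate a ``pure'' dynamic programming principle for $V$ in terms of $V$ itself (valid for every $p\neq\bar p$) and then run the induction to replace $V(\cdot,\cdot,p')$ by $V_{p'}$, whereas the paper intertwines the two, writing out the full definition of $V$ at each level (explicitly for $\sum_i p^i=N-1$ and $N-2$) and comparing it directly with the expression involving $V_{p'}$, invoking the previously established identity $V(\cdot,\cdot,p')=V_{p'}$ inside the same step.
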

\begin{proof}
We follow a procedure as the one used in \S\ref{subsec:optimal-stopping}. Consider $p$ with $\sum_{i}p^i=N-1$. By definition we have
\begin{multline*}
V(x, t, p)=\inf_{(\a, \tau)}\Bigg(\int_t^{\tau}e^{-\l(s-t)}\ell(y(s), \a(s), p, s)ds\\
+e^{-\l(\tau-t)}C(y(\tau), p, \bar p)\Bigg)=V_p(x, t)
\end{multline*}
for every $(x, t)\in\mathbb{R}^d\times[0, T]$ since $V(\cdot, \cdot, \bar p)=V_{\bar p}(\cdot, \cdot)\equiv0$. Consider now $p$ with $\sum_{i}p^i=N-2$. 
We need to show that
\begin{multline}
\label{inf1}
V(x, t, p)=\inf_{(\a, \tau, p'\in\cI_p)}\Bigg(\int_t^{\tau}e^{-\l(s-t)}\ell(y(s), \a(s), p, s)ds\\
+e^{-\l(\tau-t)}\Big(C(y(\tau), p, p')+V_{p'}(y(\tau), \tau)\Big)\Bigg).
\end{multline}
We recall that, calling $x_1:=y^\a_{(x, t, p)}(t_1)$, we have
\begin{multline}
\label{inf2}
V(x, t, p)=\\
\inf_{(\a, t\leq t_1\leq t_2, p_1\in\cI_p)}\Bigg{(}\int_t^{t_1}e^{-\l(s-t)}\ell(y^\a_{(x, t, p)}(s), \a(s), p, s)ds \\
+e^{-\l(t_1-t)}C(x_1, p, p_1)\\
+\int_{t_1}^{t_2}e^{-\l(s-t)}\ell(y^\a_{(x, t, p_1)}(s), \a(s), p_1, s)ds
\\
+e^{-\l(t_2-t)}C(y^\a_{(x, t, p_1)}(t_2), p_1, \bar p)\Bigg{)}.
\end{multline}
So we have to prove that the $\inf$ in \eqref{inf1} coincides with the $\inf$ in \eqref{inf2}. At first we show the inequality $(\leq)$. For every $x\in\mathbb{R}^d$, $t\in[0, T]$, $\a\in\cA$ and $p_1\in\cI_p$, we have
\begin{multline*}
\int_t^{t_1}e^{-\l(s-t)}\ell(y^\a_{(x, t, p)}(s), \a(s), p, s)ds\\
+e^{-\l(t_1-t)}C(x_1, p, p_1)
\\
+\int_{t_1}^{t_2}e^{-\l(s-t)}\ell(y^\a_{(x, t, p_1)}(s), \a(s), p_1, s)ds\\
+ e^{-\l(t_2-t)}C(y^\a_{(x, t, p_1)}(t_2), p_1, \bar p)\\
=\int_t^{t_1}e^{-\l(s-t)}\ell(y^\a_{(x, t, p)}(s), \a(s), p, s)ds\\
+e^{-\l(t_1-t)}C(x_1, p, p_1)+e^{-\l(t_1-t)}J(x_1, t_1, p_1, \a, t_2, \bar p)\\
\geq\int_t^{t_1}e^{-\l(s-t)}\ell(y^\a_{(x, t, p)}(s), \a(s), p, s)ds\\
+e^{-\l(t_1-t)}\Big(C(x_1, p, p_1)+V(x_1, t_1, p_1)\Big)\\
=\int_t^{t_1}e^{-\l(s-t)}\ell(y^\a_{(x, t, p)}(s), \a(s), p, s)ds\\
+e^{-\l(t_1-t)}\Big(C(x_1, p, p_1)+V_{p_1}(x_1, t_1)\Big).
\end{multline*}
Take the inf on $(\a, t_1, p_1\in\cI_p)$ to get $V(x, t, p)\geq V_p(x, t)$.
\par
The reverse inequality can be proved similarly and hence we can conclude proceeding backwardly.
\end{proof}

\subsection{Optimality condition for $V_p$ in PDE form}
For $x, \xi\in\R^d$, $t\in[0, T]$ and $p\in\cI$, we define the Hamiltonian function by
$$
H^p(x, t, \xi)=\sup_{a\in A}\{-f(x, a, p)\cdot \xi -\ell(x, a, p, t)\}.
$$
\par\smallskip
\begin{theorem}
In the hypotheses of Proposition \ref{dynprog}, for any $p\in\cI$ the value function $V_p$ is the unique bounded and uniformly continuous viscosity solution $u$ of ($\psi_p$ as in \eqref{costopsi})
\begin{equation}
\label{V}
\begin{cases}
\max\{u(x, t)-\psi_p(x, t),\\
-u_t(x, t)+\l u(x, t)+H^p(x, t, D_xu(x, t))\}=0,\\ \mbox{\hspace{5.1cm}$(x, t)\in\mathbb{R}^d\times[0, T[$}\\
u(x, T)=\psi_p(x, T), \mbox{\hspace{3.8cm}$x\in\R^d$}
\end{cases}.
\end{equation}
\end{theorem}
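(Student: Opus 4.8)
The plan is to recognize that, for each fixed $p\in\cI$, the value function $V_p$ defined in \eqref{funzionivaloritimep} is precisely the value function of a time-dependent optimal stopping problem of the type analyzed in \S\ref{opttime}, with dynamics $f(\cdot,\cdot,p)$, running cost $\ell(\cdot,\cdot,p,\cdot)$ and stopping cost $\psi_p$ given by \eqref{costopsi}. Indeed, by Proposition \ref{cont1} one can rewrite $V_p(x,t)=\inf_{(\a,\tau)}\big(\int_t^{\tau}e^{-\l(s-t)}\ell(y(s),\a(s),p,s)\,ds+e^{-\l(\tau-t)}\psi_p(y(\tau),\tau)\big)$, which is exactly the form \eqref{funzionevaloretime} with $\psi$ replaced by $\psi_p$. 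Once this identification is made, the statement becomes a direct application of Theorem \ref{pbl1} (existence) and Theorem \ref{unicmodo1} (uniqueness), since the equation \eqref{V} is nothing but \eqref{VIOT} with $H$ replaced by $H^p$ and $\psi$ replaced by $\psi_p$.

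The argument proceeds by backward induction on the level $\sum_i p^i$, starting from $p=\bar p$, where $V_{\bar p}\equiv0$ and $\cI_{\bar p}=\emptyset$. First I would verify the structural hypotheses of \S\ref{opttime}: by the assumptions of \S\ref{firstrel}, the map $f(\cdot,\cdot,p)$ is continuous, bounded and Lipschitz in $x$ uniformly in $a$, which matches \eqref{hpf0}, while $\ell(\cdot,\cdot,p,\cdot)$ is bounded, continuous, and admits a modulus of continuity in $x$ uniform in $a$ and $t$, as required for the running cost in \S\ref{opttime}.

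The crucial point, and the step I expect to be the main obstacle, is to show that the stopping cost $\psi_p$ of \eqref{costopsi} lies in $\BUC(\R^d\times[0,T])$, so that it meets the hypothesis imposed on $\psi$ in \S\ref{opttime}. This is precisely where the backward induction enters: at the top level $\sum_i p^i=N-1$ one has $\psi_p(x,t)=C(x,p,\bar p)$, which is uniformly continuous by assumption; at a generic level, $\psi_p=\inf_{p'\in\cI_p}\big(C(\cdot,p,p')+V_{p'}\big)$ is a finite infimum of sums of uniformly continuous functions, namely $C(\cdot,p,p')$ by hypothesis and $V_{p'}$ by the inductive hypothesis (equivalently, by part $(ii)$ of Proposition \ref{cont1}). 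The finiteness of $\cI_p$ is exactly what guarantees that the infimum preserves both boundedness and uniform continuity; this is the only delicate propagation one must track through the induction.

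Finally, with $\psi_p\in\BUC(\R^d\times[0,T])$ established at the current level, I would invoke Theorem \ref{pbl1} to conclude that $V_p$ is a viscosity solution of \eqref{V}, and Theorem \ref{unicmodo1} to conclude that it is the unique such solution in $\BUC(\R^d\times[0,T])$. Combined with the equivalence $V(x,t,p)=V_p(x,t)$ from Proposition \ref{dynprog}, this transfers the same characterization to the original switching value function and closes the induction.
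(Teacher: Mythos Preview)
Your proposal is correct and follows exactly the route the paper takes: the paper's proof is the single line ``See Theorem~\ref{pbl1} and~\ref{unicmodo1} in \S\ref{opttime}'', and you have simply unpacked what that reference entails, including the verification (via Proposition~\ref{cont1}) that $\psi_p\in\BUC(\R^d\times[0,T])$ so that the hypotheses of \S\ref{opttime} apply. The only superfluous remark is the final appeal to Proposition~\ref{dynprog}: the theorem concerns $V_p$ alone, so the equivalence with $V(\cdot,\cdot,p)$ is not needed to close the argument.
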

\begin{proof}
See Theorem \ref{pbl1} and \ref{unicmodo1} in \S\ref{opttime}.
\end{proof}
\par\smallskip
\begin{theorem}
The family of functions $V:=\{V_p:p\in\cI\}$ is the unique family of bounded and uniformly continuous functions $U:=\{u_p:p\in\cI\}$ that solves the problem
\begin{equation}
\label{P}
\left\{
\begin{array}{ll}
\displaystyle
\text{for any $p\in\cI$, $u_p$ is the unique visc. sol. of \eqref{V} with}\\
\displaystyle
\psi_p\ \mbox{repl. by } \psi_p^U(x, t):=\inf_{p'\in\cI_p}(C(x, p, p')+u_{p'}(x, t)),\\
\displaystyle
u_{\bar p}=0
\end{array}.
\right.
\end{equation}
\end{theorem}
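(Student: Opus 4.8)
The plan is to exploit the \emph{hierarchical} (acyclic) structure of the coupling. Introduce the level $|p|:=\sum_i p^i$. By the definition of $\cI_p$, every $p'\in\cI_p$ satisfies $p'\geq p$ componentwise with a strict inequality in at least one component, hence $|p'|>|p|$; moreover $\cI_{\bar p}=\emptyset$ and $|p|$ ranges in $\{0,1,\ldots,N\}$. Consequently the equation defining $u_p$ in \eqref{P} depends only on those $u_{p'}$ with $|p'|>|p|$, so that the coupling contains no cycles: the whole family can be solved, and its solution identified, by descending induction on the level, starting from $p=\bar p$.

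First I would establish existence, i.e.\ that $U=V$ solves \eqref{P}. By the very definition \eqref{costopsi} of $\psi_p$ we have $\psi_p^V=\psi_p$, while $V_{\bar p}\equiv 0$ by construction. The PDE characterization of $V_p$ proved in the previous theorem states precisely that $V_p$ is the unique bounded and uniformly continuous viscosity solution of \eqref{V} with stopping cost $\psi_p=\psi_p^V$. Hence $V=\{V_p\}$ is a family solving \eqref{P}.

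For uniqueness, let $U=\{u_p\}$ be any family of bounded and uniformly continuous functions solving \eqref{P}. I would prove $u_p=V_p$ by descending induction on $k=|p|$. The base case $k=N$ is immediate, since then $p=\bar p$ and $u_{\bar p}=0=V_{\bar p}$. For the inductive step, assume $u_{p'}=V_{p'}$ whenever $|p'|>k$, and fix $p$ with $|p|=k$. As every $p'\in\cI_p$ has $|p'|>k$, the inductive hypothesis yields
$$
\psi_p^U(x,t)=\inf_{p'\in\cI_p}\bigl(C(x,p,p')+u_{p'}(x,t)\bigr)=\inf_{p'\in\cI_p}\bigl(C(x,p,p')+V_{p'}(x,t)\bigr)=\psi_p(x,t).
$$
Thus $u_p$ and $V_p$ are both viscosity solutions of \eqref{V} with the \emph{same} stopping cost $\psi_p$; by the scalar uniqueness result (Theorem \ref{unicmodo1}, applied with $H$ replaced by $H^p$ and $\psi$ by $\psi_p$) they must coincide, $u_p=V_p$. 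This closes the induction and gives $U=V$.

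The one point requiring care—and the main obstacle—is ensuring that at each level the scalar problem \eqref{V} is well posed, i.e.\ that the replaced stopping cost $\psi_p^U$ meets the standing hypotheses on $\psi$ of \S\ref{opttime} (bounded and uniformly continuous), so that Theorem \ref{pbl1} and Theorem \ref{unicmodo1} may legitimately be invoked level by level. This is however guaranteed by the induction itself: $\cI_p$ is finite, $C(\cdot,p,p')$ is bounded and uniformly continuous in $x$ uniformly in the discrete arguments, and each $u_{p'}$, being equal to $V_{p'}\in\BUC$ by Proposition \ref{cont1}, is bounded and uniformly continuous; hence the finite infimum $\psi_p^U$ inherits these properties. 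With well-posedness of each scalar equation thereby secured, the backward induction produces exactly one family, which must therefore be $V$.
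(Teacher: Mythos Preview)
Your argument is correct and follows essentially the same route as the paper: a backward induction on the level $\sum_i p^i$, using at each step that $\psi_p^U=\psi_p$ (by the inductive identification $u_{p'}=V_{p'}$ for all $p'\in\cI_p$) so that the scalar uniqueness result forces $u_p=V_p$. Your version is in fact a bit more explicit than the paper's, since you spell out the induction cleanly and verify that $\psi_p^U\in\BUC$ at each level so that Theorems~\ref{pbl1} and~\ref{unicmodo1} apply.
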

\begin{proof}
Note that $\psi^V_p=\psi_p$ as in (\ref{costopsi}). For $p\in\cI$ such that $\sum_ip^i=N-1$, we have that the problem \eqref{P} is the same as \eqref{V} because $\psi_p=\psi_p^U$. Then, by Theorem \ref{unicmodo1} in \S\ref{opttime}, $V_p=u_p$. Then, if $p\in\cI$ is such that $\sum_ip^i=N-2$, we also have $\psi_p=\psi_p^U$ and again $u_p=V_p$. We then conclude backwardly.
\end{proof}

\section{Conclusion}
\label{conclusion}
We discuss the well-position of the time-dependent hybrid control problem to model an optimal visiting problem. To such a goal, we also adapt some techniques, already used for the non time-dependent optimal stopping problem, to the time-dependent one, which seems to be not discussed elsewhere in all its details, and whose formulation is not always straightforward. As additional point of relevance, these results are preliminary for the building of the mean-field case. The same ideas can be extended to other standard cases as the presence of automatic switching or the stationary case.  Getting rid of the relaxation in \S\ref{firstrel}, is also worth investigating.


\begin{thebibliography}{9}
\providecommand{\natexlab}[1]{#1}
\providecommand{\url}[1]{\texttt{#1}}
\providecommand{\urlprefix}{URL }
\expandafter\ifx\csname urlstyle\endcsname\relax
  \providecommand{\doi}[1]{doi:\discretionary{}{}{}#1}\else
  \providecommand{\doi}{doi:\discretionary{}{}{}\begingroup
  \urlstyle{rm}\Url}\fi

\bibitem[{Bagagiolo and Benetton(2012)}]{bagben}
Bagagiolo, F. and Benetton, M. (2012).
\newblock About an optimal visiting problem.
\newblock \emph{Appl. Math. Optim.}, 65(1), 31--51.

\bibitem[{Bagagiolo et~al.(2019)Bagagiolo, Faggian, Maggistro, and
  Pesenti}]{bagfagmagpes}
Bagagiolo, F., Faggian, S., Maggistro, R., and Pesenti, R. (2019).
\newblock Optimal control of the mean field equilibrium for a pedestrian
  tourists' flow model.
\newblock \emph{Netw. Spat. Econ.}

\bibitem[{Bagagiolo et~al.(2022)Bagagiolo, Festa, and
  Marzufero}]{BFMNoDea}
Bagagiolo, F., Festa, A., and Marzufero, L. (2022).
\newblock Hybrid control for optimal visiting problems for a single player and
  for a crowd.
\newblock \emph{NoDEA Nonlinear Differential Equations Appl.}, 29(4).

\bibitem[{Bagagiolo et~al.(2021)Bagagiolo, Festa, and
  Marzufero}]{BFMProc2}
Bagagiolo, F., Festa, A., and Marzufero, L. (2021).
\newblock The orienteering problem: a hybrid control formulation.
\newblock \emph{IFAC-PapersOnLine}, 54(5), 175--180.

\bibitem[{Bardi and Capuzzo~Dolcetta(1997)}]{BardiCapuz@incollection}
Bardi, M. and Capuzzo~Dolcetta, I. (1997).
\newblock \emph{Optimal control and viscosity solutions of
  {H}amilton-{J}acobi-{B}ellman equations}.
\newblock Birkh\"auser.

\bibitem[{Bensoussan and Menaldi(1997)}]{bensoussan1997hybrid}
Bensoussan, A. and Menaldi, J. (1997).
\newblock Hybrid control and dynamic programming.
\newblock \emph{Dynam. Contin. Discrete Impuls. Systems}, 3(4), 395--442.

\bibitem[{Branicky et~al.(1998)Branicky, Borkar, and
  Mitter}]{branicky1998unified}
Branicky, M., Borkar, V., and Mitter, S. (1998).
\newblock A unified framework for hybrid control: Model and optimal control
  theory.
\newblock \emph{IEEE Trans. Automat. Contr.}, 43(1), 31--45.

\bibitem[{Dharmatti and Ramaswamy(2005)}]{dharmatti2005hybrid}
Dharmatti, S. and Ramaswamy, M. (2005).
\newblock Hybrid control systems and viscosity solutions.
\newblock \emph{SIAM J. Control Optim.}, 44(4), 1259--1288.

\bibitem[{Festa et~al.(2017)Festa, Guglielmi, Hermosilla, Picarelli, Sahu,
  Sassi, and Silva}]{Festa2017127}
Festa, A., Guglielmi, R., Hermosilla, C., Picarelli, A., Sahu, S., Sassi, A.,
  and Silva, F. (2017).
\newblock Hamilton-{J}acobi-{B}ellman equations.
\newblock \emph{Lecture Notes in Mathematics}, 2180, 127--261.

\end{thebibliography}
\end{document}